\definecolor{bmcolor}{rgb}{0.9, 0.3, 0}
\definecolor{aacolor}{rgb}{0, .4, .4}
\definecolor{pmcolor}{rgb}{0.09, .45, .27}
\newcommand{\R}{\mathbb{R}}
\newcommand{\N}{\mathbb{N}}
\newcommand{\CC}{\mathbb{C}}
\newcommand{\DD}{\mathbb{D}}
\newcommand{\HH}{\mathcal{H}}
\newcommand{\ang}[1]{\langle #1 \rangle}
\newtheorem{thm}{Theorem}[section]
\newtheorem{cor}[thm]{Corollary}
\newtheorem{rem}[thm]{Remark}
\newtheorem{lem}[thm]{Lemma}
\newtheorem{defn}[thm]{Definition}
\newtheorem{prop}[thm]{Proposition}
\title{
Demystifying Carleson Frames}
\author{ Ilya Krishtal and Brendan Miller }
\begin{document}

\address{Department of Mathematical Sciences, Northern Illinois University, DeKalb, IL 60115 \\
email: ikrishtal@niu.edu, bmiller14@niu.edu}

\begin{abstract}
We study spanning properties of Carleson systems and prove a recent conjecture on frame subsequences of Carleson frames. In particular, we show that if $\{T^k\varphi\}_{k=0}^\infty$ is a Carleson frame, then every subsequence of the form $\{T^{Nk+j_k}\varphi\}_{k=0}^\infty$ where $N\in\N$ and $0 \leq j_k < N$ is also a frame.
\end{abstract}

\maketitle

\section{Introduction}

Motivated by applications in dynamical sampling \cite{ACMT17, ADK13,  AMJMP23}, O.~Christensen et al.~have developed a theory of dynamical frames, which are frames of orbits of a vector in a Hilbert space under the action of a linear operator \cite{CH19, CH23, CHP20, CHPS24}. A special kind of such frames was introduced in \cite{ACMT17} and called \emph{Carleson frames} in \cite{CHPS24}. Carleson frames possess very high redundancy and other mysterious properties that have not been observed in other classes of frames.  In this note, we begin to shed some light on excessive redundancy of Carleson frames by expanding the class of Carleson systems which are known to remain frames or form complete sequences. In particular, we prove a conjecture from \cite{CHPS24} showing that if $\{T^k\varphi\}_{k=0}^\infty$ is a Carleson frame with a positive definite operator $T$, then every subsequence of the form $\{T^{Nk+j_k}\varphi\}_{k=0}^\infty$, where $N\in\N$ and $0 \leq j_k <  N$, is also a frame.

Let us make the statement above more precise. 

We will call a complex sequence $\{z_j\}_{j = 0}^\infty$ a \emph{Carleson sequence} if $z_j \in \DD = \{z\in\CC: |z|<1\}$ and 
\begin{equation}\label{CCond}
\inf_{k} \prod_{j\neq k} \bigg|\frac{z_k - z_j}{1 - z_j\overline{z_k}} \bigg| =: \delta > 0.    
\end{equation}

Let $\{e_j\}_{j = 0}^\infty$ be an orthonormal basis for a (separable) Hilbert space $\HH$, and assume that $T$ is an operator on $\HH$ satisfying $Te_j = z_je_j$ for some Carleson sequence $\{z_j\}_{j =0}^\infty$. Note that hereinafter we index all sequences over $\N_0 = \N\cup\{0\}$ for the sake of convenience. 

According to \cite[Theorem~3.16]{ACMT17}, the system $\{T^{k}\varphi\}_{k = 0}^\infty$ forms a Carleson frame if 
\begin{equation}\label{cyclic_vect}
\varphi = \sum_{j \in \N_0} m_j\sqrt{1-|z_j|^2}e_j,
\end{equation}
where $\{m_j\}_{j \in \N_0}$ is a complex sequence such that $ 0 < C_1 \leq |m_j| \leq C_2$. We will assume without loss of generality that $m_j = 1$ for all $j\in\N_0$ (due to a change-of-basis argument).
Given a Carleson frame or a more general \emph{Carleson system} of the form $\{T^\lambda\varphi\}_{\lambda\in\Lambda}$, $\Lambda\subseteq[0,\infty)$, with $\varphi$ given by \eqref{cyclic_vect}, we shall refer to the sequence $\{z_j\}_{j =0}^\infty$ as its \emph{Carleson spectrum}. 

It was proven in \cite[Theorem~2.1]{CHPS24} that when $\{|z_j|\}_{j = 0}^\infty$ is strictly increasing, the system $\{T^{Nk}\varphi\}_{k=0}^\infty$ also forms a frame for any constant $N \in \N$. Furthermore, the authors of \cite{CHPS24} conjectured that if the Carleson spectrum is real and positive, i.e.~$z_j \in (0,1)$ 
for each $j \in \N$, this result can be extended to encompass Carleson systems of the form $\{T^{Nk+j_k}\varphi\}_{k =0}^\infty$ where $j_k \in \{0,1,...,N-1\}$. As mentioned above, the main result of this paper establishes this conjecture. In fact, we achieve a slightly more general result in Theorem \ref{theorem_main} as we consider fractional powers of $T$ by allowing non-integer $j_k\in [0, N)$. We also establish completeness results for certain systems with complex Carleson spectrum (Theorem \ref{theorem_completeness}) and estimate lower bounds for continuous Carleson frames (Theorem \ref{theorem_estimates}).

Let us recall a few standard frame-theoretic notions and results that we shall utilize in our proofs (see, e.g.~\cite{Christensen16}). 

For a sequence $\{f_k\}_{k =0}^\infty$ of elements in $\HH$, \emph{synthesis operator} $\Phi: \ell^2(\N_0) \to \HH$ is given by 
\[
(c_k)_{k = 0}^\infty \mapsto \sum_{k = 0}^\infty c_k f_k.
\]
The adjoint $\Phi^*: \HH\to \ell^2(\N_0)$ of the synthesis operator is given by $\Phi^*f = (\langle f, f_k\rangle)_{k \in \N_0}$ and is called \emph{analysis operator}.

 The following is a classical characterization of frames.
\begin{prop}\label{proposition_frame_equivalent}
    Suppose that $\{f_k\}_{k = 0}^\infty$ is a sequence in $\HH$. Then the following are equivalent: 
    \begin{enumerate}
        \item the sequence $\{f_k\}_{k = 0}^\infty$ forms a frame;
        \item the synthesis operator $\Phi: \ell^2(\N_0) \to \HH$ is well defined, bounded, and surjective;
        \item the analysis operator $\Phi^*: \HH \to \ell^2(\N_0)$ is well defined, bounded and bounded below. 
    \end{enumerate}
\end{prop}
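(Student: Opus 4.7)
The plan is to prove the cycle by establishing (1) $\Leftrightarrow$ (3) directly from the frame inequalities, and then (2) $\Leftrightarrow$ (3) via standard Hilbert-space duality. Both equivalences are essentially textbook in frame theory, so the proof amounts mostly to careful bookkeeping of well-definedness and norm identities.

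For (1) $\Leftrightarrow$ (3), I would begin from the formal identity
\[
\|\Phi^* f\|_{\ell^2}^2 = \sum_{k=0}^\infty |\ang{f, f_k}|^2, \qquad f \in \HH.
\]
Under this identity, the upper frame bound $\sum_k |\ang{f, f_k}|^2 \le B\|f\|^2$ is exactly the statement that $\Phi^* f \in \ell^2(\N_0)$ for every $f \in \HH$ and that $\Phi^*$ is bounded with $\|\Phi^*\|^2 \le B$. Similarly, the lower frame bound $A\|f\|^2 \le \sum_k |\ang{f, f_k}|^2$ is exactly the bounded-below condition $\|\Phi^* f\|_{\ell^2} \ge \sqrt{A}\,\|f\|$. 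The reverse implications fall out of the same translation.

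For (2) $\Leftrightarrow$ (3), I would invoke two standard facts about adjoints between Hilbert spaces. First, once $\Phi^*$ is everywhere defined and bounded, the Hilbert-space adjoint $\Phi = (\Phi^*)^*$ is automatically well-defined and bounded with the same norm; specializing to finitely supported sequences shows that $\Phi$ agrees with the synthesis map on a dense set, and hence the series $\sum_k c_k f_k$ converges unconditionally in $\HH$ for every $(c_k) \in \ell^2(\N_0)$. Second, for any bounded linear map $\Phi$ between Hilbert spaces, surjectivity of $\Phi$ is equivalent to $\Phi^*$ being bounded below: the forward direction uses the open mapping theorem applied to $\Phi|_{(\ker\Phi)^\perp}$, while the converse uses that a bounded-below operator is injective with closed range, so that $\Phi$ has range equal to $(\ker \Phi^*)^\perp = \HH$.

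The only genuine subtlety, rather than an obstacle, is keeping two different senses of ``well-defined'' straight. For $\Phi$, well-definedness means unconditional $\ell^2$-convergence of the synthesis series, which must be extracted from the boundedness of $\Phi^*$ by the duality argument above. For $\Phi^*$, well-definedness means that the coefficient sequence $(\ang{f, f_k})_{k\in\N_0}$ lies in $\ell^2(\N_0)$, which is precisely the upper frame bound. Once these are disentangled, the three equivalences reduce to mechanical verification.
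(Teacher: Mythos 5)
Your proof is correct, but note that the paper does not actually prove this proposition: it is quoted as a ``classical characterization of frames'' with an implicit pointer to Christensen's book, so there is no in-paper argument to compare against. Your route --- translating the frame inequalities into well-definedness, boundedness, and boundedness below of $\Phi^*$, identifying $\Phi$ with $(\Phi^*)^*$ via the finitely supported sequences, and then using the standard duality between surjectivity of $\Phi$ and the lower bound for $\Phi^*$ --- is exactly the textbook proof the authors are deferring to, and your handling of the two senses of ``well-defined'' is the right place to be careful.
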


As in \cite{CHPS24}, one of our key techniques is frame perturbation. In particular, we will use the following slight modification of \cite[Exercise 22.4]{Christensen16}. 

\begin{lem}\label{lemma_perturb}
    Let $\{f_k\}_{k = 0}^\infty$ be a frame for $\HH$ with frame bounds $B \geq A > 0$. If $V \subset \HH$ is a closed subspace, $P_V$ is an orthogonal projection onto $V$, and $\{h_k\}_{k = 0}^\infty$ is any sequence such that 
    \[
     \sum_{k = 0}^\infty \|P_V(f_k - h_k) \|^2 < A
    \]
    holds, then $\{P_Vh_k\}_{k=0}^\infty$ forms a frame for $V$.
\end{lem}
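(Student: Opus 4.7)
The plan is to reduce the problem to a standard Paley--Wiener style perturbation argument carried out inside $V$.

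First, I would show that $\{P_V f_k\}_{k=0}^\infty$ is itself a frame for $V$ with the same bounds $A$ and $B$. Since $P_V$ is self-adjoint and $P_V v = v$ for $v \in V$, one has $\langle v, P_V f_k\rangle = \langle P_V v, f_k\rangle = \langle v, f_k\rangle$, so
\[
\sum_{k = 0}^\infty |\langle v, P_V f_k\rangle|^2 = \sum_{k = 0}^\infty |\langle v, f_k\rangle|^2 \in [A\|v\|^2,\, B\|v\|^2].
\]
In particular, by Proposition \ref{proposition_frame_equivalent}, the analysis operator $\Phi_V^* : V \to \ell^2(\N_0)$ of $\{P_V f_k\}$ is bounded below by $\sqrt{A}$, and $\{P_V f_k\}$ is a frame for $V$.

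Next, I would treat $\{P_V h_k\}$ as a perturbation of $\{P_V f_k\}$ inside $V$. Let $U : V \to \ell^2(\N_0)$ be the operator $Uv := (\langle v, P_V(f_k - h_k)\rangle)_{k\ge 0}$. Termwise Cauchy--Schwarz together with the hypothesis gives
\[
\|Uv\|_{\ell^2}^2 \leq \|v\|^2 \sum_{k = 0}^\infty \|P_V(f_k - h_k)\|^2 \leq A\|v\|^2,
\]
so $\|U\| \leq \sqrt{A}$. The analysis operator $\Psi_V^*$ of $\{P_V h_k\}$ on $V$ equals $\Phi_V^* - U$, and the reverse triangle inequality in $\ell^2$ yields, for $v \in V$,
\[
\|\Psi_V^* v\|_{\ell^2} \geq \|\Phi_V^* v\|_{\ell^2} - \|Uv\|_{\ell^2} \geq \bigl(\sqrt{A} - \sqrt{R}\bigr)\|v\|,
\]
where $R := \sum_{k = 0}^\infty \|P_V(f_k - h_k)\|^2$. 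A symmetric forward-triangle estimate gives the matching upper bound $\|\Psi_V^* v\|_{\ell^2} \leq (\sqrt{B}+\sqrt{R})\|v\|$, and Proposition \ref{proposition_frame_equivalent} then identifies $\{P_V h_k\}$ as a frame for $V$.

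The main subtle point, and really the only place the argument can fail, is ensuring strict positivity of the lower frame bound: the standard Paley--Wiener perturbation requires $R < A$ strictly, and the pointwise estimate above degenerates to $0$ in the boundary case $R = A$. I would therefore read the stated hypothesis in this strict sense, consistently with the original formulation of \cite[Exercise 22.4]{Christensen16}.
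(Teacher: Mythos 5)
Your argument is correct and is exactly the standard Paley--Wiener-type perturbation proof that underlies the cited \cite[Exercise 22.4]{Christensen16}; the paper itself gives no proof of this lemma, so there is nothing different to compare against. You are also right to flag the boundary case: with $R=A$ the lower bound $(\sqrt{A}-\sqrt{R})\|v\|$ degenerates and the conclusion can fail (e.g.\ $h_k=0$ with $\sum_k\|P_Vf_k\|^2=A$), so the hypothesis must be read as strict inequality --- which is consistent with how the lemma is actually applied in the proof of Lemma \ref{lemma_projection}, where $J$ is chosen so that $\sum_{j\ge J}(1-z_j^2)<A$.
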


Perturbation results by themselves, however, seem to be insufficient to establish the conjecture from \cite{CHPS24}, let alone to fully understand the spanning properties of Carleson systems. In Section \ref{sec:synthesis}, we have accomplished the former goal by combining perturbation techniques with the celebrated M\"untz-Sz\'asz theorem on the completeness of monomial systems in $C[0,1]$. The latter goal is far from being achieved, although, we have made some headway in Section \ref{sec:analysis}. We intend to pursue it in our future research, perhaps, utilizing techniques from sampling and interpolation in spaces of holomorphic functions \cite{GHOR19, Levin80, NN2012} (or more general reproducing kernel Hilbert spaces \cite{FGHKR17}), more advanced perturbation techniques (such as in \cite{KM25n}), and the spectral theory of totally positive matrices \cite{Pinkus10}.  

\section{Carleson Systems: Synthesis Operator Perspective}\label{sec:synthesis}

In this section, we establish our main result by working with the synthesis operator of a Carleson system. For sequences of the form $\{T^{Nk+j_k}\varphi\}_{k =0}^\infty$, it is clear that the synthesis operator is well defined and bounded (i.e.~they are Bessel sequences); thus, to establish the conjecture, it only remains to show its surjectivity. We note that in functional analysis it is typically much easier to obtain boundedness below of the adjoint operator than prove the surjectivity directly. In this case, it turned out to be possible to prove the surjectivity by definition whereas our attempts to use the analysis operator perspective have yielded results that were not strong enough for this purpose (nevertheless, some of those results are useful and will be presented in the following section). 

It is most convenient to represent Carleson systems in $\ell^2(\N_0)$. Let $\delta_j$ denote the $j^{th}$ standard basis vector in $\ell^2(\N_0)$, and let $D \in B(\ell^2(\N_0))$ be the diagonal operator $D\delta_j = z_j\delta_j$ where $\mathcal Z = \{z_j\}_{j \in \N_0}$ is a Carleson sequence. Then, by the above, $\{D^kg\}_{k=0}^\infty$ is a frame for $\ell^2(\N_0)$ when 
\[
g = (g_j)_{j=0}^\infty = \left(m_j\sqrt{1-|z_j|^2}\right)_{j=0}^\infty.
\]
As mentioned in the introduction, by passing to a change of basis, it will suffice for us to only consider
\begin{equation}\label{eq_g}
    g = (g_j)_{j=0}^\infty = \left(\sqrt{1-|z_j|^2}\right)_{j=0}^\infty.
\end{equation}
We will refer to such frames $\{D^kg\}_{k=0}^\infty$ as \emph{canonical} Carleson frames. Clearly, each Carleson frame has a canonical representation.

Since the matrix of the operator $D$ is diagonal, there is typically no reason to restrict our investigation to merely integer powers of $D$. Thus, in general, we will consider the systems of the form $\{D^\lambda g\}_{\lambda\in \Lambda}$, where $\Lambda = \{\lambda_0,\lambda_1,...\} \subset [0,\infty)$ is a countable infinite set and $D^\lambda \delta_j = z_j^\lambda \delta_j = e^{\lambda(\log r_j+i\theta_j)}\delta_j$, $z_j = r_je^{i\theta_j}\neq 0$, $r_j> 0$, $\theta_j \in [-\pi, \pi)$, $j\in\N_0$.

With the above notation, we may identify the synthesis operator of the sequence $\{D^{\lambda_k}g\}_{k=0}^\infty$ with an infinite matrix $\Phi^\Lambda = \Phi^\Lambda_{\mathcal Z}$ with the entries $\phi_{jk}^\Lambda$ given by  
\begin{equation} \label{synop}
\phi_{jk}^\Lambda=z_j^{\lambda_k}\sqrt{1-|z_j|^2}, \ j, k \in \N_0.  \end{equation}



In this section, we will assume that the Carleson spectrum $\{z_j\}_{j = 0}^\infty$ satisfies $z_j \in (0,1)$ for each $j\in\N_0$. We also assume that $z_j > z_{j-1}$ for all $j\in\N$.


Our proof of the surjectivity of the synthesis operator has two key ingredients, the first of which is the following perturbation lemma.

\begin{lem}\label{lemma_projection}
    If $\Lambda = \{Nk + j_k\}_{k \in \N_0}$ where $N \in \N$ and $j_k \in[0, N)$, then there exists a constant $J\gg 0$ such that $\{D^{\lambda}g\}_{\lambda \in \Lambda}$ is an outer frame for $V_J := \overline{\text{span}}\{\delta_j\}_{j \geq J}$, i.e.~$\{P_{V_J}D^{\lambda}g\}_{\lambda \in \Lambda}$ forms a frame for $V_J$.
\end{lem}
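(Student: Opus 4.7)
The plan is to apply the perturbation Lemma~\ref{lemma_perturb} with the unperturbed frame $\{f_k\}$ chosen to be $\{D^{Nk}g\}_{k \in \N_0}$, which is a frame for $\HH = \ell^2(\N_0)$ by \cite[Theorem~2.1]{CHPS24}, and with $\{h_k\}$ taken to be our target sequence $\{D^{Nk+j_k}g\}_{k \in \N_0}$. Letting $A>0$ denote a lower frame bound of $\{D^{Nk}g\}$ and setting $V = \overline{\text{span}}\{\delta_j\}_{j \geq J}$, the conclusion of the lemma will deliver exactly the outer-frame property sought, provided $J$ is chosen so that
\[
\sum_{k=0}^\infty \bigl\|P_V\bigl(D^{Nk}g - D^{Nk+j_k}g\bigr)\bigr\|^2 \;\leq\; A.
\]

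For the key estimate I would exploit the diagonality of $D$: the $j$-th coordinate of the $k$-th difference is $z_j^{Nk}(1 - z_j^{j_k})\sqrt{1-z_j^2}$. The crucial observation is that $z_j \in (0,1)$ together with $0 \leq j_k < N$ forces $z_j^{j_k} \geq z_j^N$, and hence $1 - z_j^{j_k} \leq 1 - z_j^N$ uniformly in the unknown $j_k$. Summing the geometric series $\sum_k z_j^{2Nk} = (1-z_j^{2N})^{-1}$ and using the algebraic identity $(1-z_j^N)^2/(1-z_j^{2N}) = (1-z_j^N)/(1+z_j^N) \leq 1$, the double sum collapses to
\[
\sum_{k=0}^\infty \bigl\|P_V\bigl(D^{Nk}g - D^{Nk+j_k}g\bigr)\bigr\|^2 \;\leq\; \sum_{j \geq J}(1-z_j^2) \;\leq\; 2\sum_{j\geq J}(1-z_j).
\]

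To finish, I would invoke Blaschke summability: the Carleson hypothesis \eqref{CCond} forces each infinite product to converge to a positive value, which is equivalent to $\sum_j(1-|z_j|^2) < \infty$ and hence to $\sum_j(1-z_j)<\infty$ in our real positive setting. The tail on the right is therefore the tail of a convergent series and can be made smaller than $A/2$ by choosing $J$ sufficiently large, which closes the argument. I do not anticipate a serious obstacle here; the entire proof hinges on the identity $(1-z_j^N)^2/(1-z_j^{2N}) = (1-z_j^N)/(1+z_j^N)$, without which the $k$-sum would produce only a $1/N$-style lower bound and the $j$-series $\sum_j(1-z_j^2)/(1-z_j^{2N})$ would diverge. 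The only point requiring care is to use the sharper bound $(1-z_j^{j_k})^2 \leq (1-z_j^N)^2$ rather than the trivial one.
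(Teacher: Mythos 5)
Your proposal is correct and follows essentially the same route as the paper: perturbing the frame $\{D^{Nk}g\}_{k\in\N_0}$ of \cite[Theorem~2.1]{CHPS24} via Lemma~\ref{lemma_perturb}, with the same coordinatewise bound $1-z_j^{j_k}\le 1-z_j^N$, the same geometric-series summation giving $(1-z_j^N)^2/(1-z_j^{2N})\le 1$, and the same appeal to the Blaschke condition to make the tail $\sum_{j\ge J}(1-z_j^2)$ small. Your remark that the trivial bound $(1-z_j^{j_k})^2\le 1$ would not suffice is a correct and worthwhile observation, and your writing of the perturbation quantity as $\sum_k\|P_V(\cdot)\|^2$ is the form actually required by Lemma~\ref{lemma_perturb}.
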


\begin{proof}
    By \cite[Theorem~2.1]{CHPS24}, the sequence $\{D^{Nk}g\}_{k = 0}^\infty$ forms a frame for $\ell^2(\N_0)$. If $J \in \N$ is any number and $V_J := \overline{\text{span}}\{\delta_j\}_{j \geq J}$, we can estimate 
    \begin{equation}\label{CHPS_ineq}
    \begin{split}
        \bigg\| \sum_{k = 0}^\infty P_{V_J}(D^{Nk}g - D^{Nk+j_k}g)\bigg\|^2 &= \sum_{j = J}^\infty \bigg| \sum_{k = 0}^\infty z_j^{Nk}\sqrt{1-z_j^2} (1-z_j^{j_k}) \bigg|^2\\
        &\leq \sum_{j = J}^\infty (1-z_j^2)\bigg|\sum_{k = 0}^\infty z_j^{Nk}(1-z_j^{N})\bigg|^2 \\
        &= \sum_{j = J}^\infty (1-z_j^2) \frac{(1-z_j^{N})^2}{(1-z_j^{N})^2} \\
        &= \sum_{j = J}^\infty (1-z_j^2),
        \end{split} 
    \end{equation}
    where the inequality follows since $\mathcal Z \subset (0,1)$ and $j_k < N$. 
    
    The Carleson condition \eqref{CCond} implies that $\sum_{j = 0}^\infty (1-z_j^2) < \infty$ (see \cite[Lecture VI]{NN2012}), so for sufficiently large $J$ we have 
    \[
    \sum_{j = J}^\infty (1-z_j^2) < A,
    \]
    where $A$ is the lower frame bound for $\{D^{Nk}g\}_{k = 0}^\infty$. Thus, $\{P_{V_J}D^{\lambda}g\}_{\lambda \in \Lambda}$ forms a frame for $V_J$ by Lemma \ref{lemma_perturb}. 
\end{proof}

Note that the estimates \eqref{CHPS_ineq} were already obtained in the proof of \cite[Lemma~2.4]{CHPS24}, albeit with integer $j_k$ and a slightly different argument. However, the purpose of this calculation in \cite{CHPS24} was to argue that sequences of the form $\{D^{Nk}g\}_{k = 0}^{J-1} \cup \{D^{Nk + j_k}g\}_{k = J}^\infty$ formed a frame for sufficiently large values of $J$ (and integer $j_k$). Our reinterpretation of these estimates is crucial to begin the proof of the main theorem. To finish it off, we utilize Lemma \ref{MSlemma} below. 

We prefer to state Lemma \ref{MSlemma} in a more general form than is needed for the proof of the main result by permitting a larger class of sets $\Lambda$.


\begin{lem}\label{MSlemma}
    Let the set $\Lambda = \{\lambda_0,\lambda_1,...\}\subseteq [0,\infty)$ satisfy the generalized M\"untz-Sz\'asz condition
    \begin{equation}
        \label{MSCond}
        \sum_{k=0}^\infty \frac{\lambda_k}{\lambda_k^2+1} = \infty
    \end{equation}
    (all $\lambda_k$ are assumed to be distinct), and $\mathcal{Z}$ be an increasing Carleson sequence of positive numbers. Assume also that the function $\vartheta: [0,1)\to\CC$ given by
    \begin{equation}\label{tech_cond}
    \vartheta(z) = \sum_{k = 0}^\infty z^{\lambda_k}, \ z\in [0,1),
    \end{equation}
    is well defined and satisfies
    \begin{equation}
        \label{techcond2}
        \sup_{z\in[0,1)} \vartheta(z){(1-z)} < \infty.
    \end{equation}
     Let $\mathcal{Z}_n = \mathcal{Z}\setminus\{z_0, z_1, \ldots, z_{n-1}\}$, $n\in\N$, and assume that the synthesis matrix $\Phi^\Lambda_{\mathcal Z_J}$ is bounded and surjective for a fixed $J\in\N$. 
    Then the synthesis matrix $\Phi^\Lambda_{\mathcal Z}$ is also bounded and surjective.
\end{lem}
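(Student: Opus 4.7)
The plan is to show the range of $\Phi^\Lambda_{\mathcal Z}$ is both dense and closed in $\ell^2(\N_0)$, from which surjectivity follows.

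For the density step, I would take any $w = (w_j) \in \ell^2(\N_0)$ with $(\Phi^\Lambda_{\mathcal Z})^* w = 0$, which (using that the $z_j$ are positive reals) reads $\sum_{j} z_j^{\lambda_k}\sqrt{1-z_j^2}\, w_j = 0$ for every $k \in \N_0$. By Cauchy-Schwarz together with $\sum_j (1-z_j^2) < \infty$ --- the standard Carleson consequence already invoked in Lemma \ref{lemma_projection} --- the weights $u_j := w_j \sqrt{1-z_j^2}$ are absolutely summable, so $\mu := \sum_j u_j \delta_{z_j}$ is a finite complex Borel measure on $[0,1]$, supported in $(0,1)$, whose $\lambda_k$-moments all vanish. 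Under the hypothesis \eqref{MSCond}, the Müntz-Szász theorem gives density of $\mathrm{span}\{z^{\lambda_k}\}_{k\in\N_0}$ in $C_0[0,1] := \{f \in C[0,1]: f(0) = 0\}$. Since $\{z_j\}$ is discrete in $[0,1)$ with only $1$ as an accumulation point, for each $m$ one can construct a continuous tent function $f_m \in C_0[0,1]$ with $f_m(z_m) = 1$ and $f_m(z_i) = 0$ for $i \neq m$; uniformly approximating $f_m$ by linear combinations of the $z^{\lambda_k}$ and integrating against $\mu$ forces $u_m = 0$, hence $w_m = 0$ for every $m$.

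For the closed-range step, I would decompose $\Phi^\Lambda_{\mathcal Z} = A + B$, where $A := P_J \Phi^\Lambda_{\mathcal Z}$ takes values in the finite-dimensional space $\CC^J$ and $B := Q_J \Phi^\Lambda_{\mathcal Z}$ coincides, after relabeling of rows, with $\Phi^\Lambda_{\mathcal Z_J}$ and is therefore surjective onto $V_J := Q_J \ell^2(\N_0)$ by hypothesis; the open mapping theorem furnishes a bounded right inverse $B^\dagger$. Given a convergent sequence $\Phi^\Lambda_{\mathcal Z} c_n \to y$, I would set $\tilde c := B^\dagger Q_J y$ and $d_n := (c_n - \tilde c) - B^\dagger B(c_n - \tilde c) \in \ker B$; then $A d_n$ converges in the closed (because finite-dimensional) subspace $A(\ker B) \subseteq \CC^J$, and assembling the limit $d \in \ker B$ with $\tilde c$ produces a preimage of $y$, so $\mathrm{range}(\Phi^\Lambda_{\mathcal Z})$ is closed. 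Combined with density, this gives $\mathrm{range}(\Phi^\Lambda_{\mathcal Z}) = \ell^2(\N_0)$.

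I expect the density step to be where the substance lies: it hinges on the precise interplay between the Carleson weight $\sqrt{1-z_j^2}$ and the summability $\sum(1-z_j^2) < \infty$, which together convert the adjoint equation into a finite-measure moment problem on $[0,1]$ to which Müntz-Szász applies, while the observation that $\mu$ assigns no mass to $0$ makes density in $C_0[0,1]$ (rather than in all of $C[0,1]$) already sufficient to separate the atoms $\delta_{z_m}$. The closed-range step is then a routine open-mapping argument that uses only that $\mathrm{range}(A) \subseteq \CC^J$ is finite-dimensional.
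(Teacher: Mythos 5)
Your argument is correct, but it takes a genuinely different route from the paper. The paper restores the deleted rows one at a time: assuming $\Phi^\Lambda_{\mathcal Z_J}$ is surjective, it shows $\delta_{J-1}\in\mathrm{range}(\Phi^\Lambda_{\mathcal Z_{J-1}})$ by invoking the full M\"untz--Sz\'asz theorem to build a finitely supported $b$ with $\sqrt{1-z_{J-1}^2}\,\vartheta_b(z_{J-1})\approx 1$ and $|\vartheta_b(z_j)|<\varepsilon$ for all $j\ge J$, and then subtracting the small correction $c=(\Phi^\Lambda_{\mathcal Z_J})^*\bigl(\Phi^\Lambda_{\mathcal Z_J}(\Phi^\Lambda_{\mathcal Z_J})^*\bigr)^{-1}\Phi^\Lambda_{\mathcal Z_J}b$ supplied by the pseudo-inverse; a quantitative choice of $\varepsilon$ (this is where \eqref{techcond2} and the constant $\gamma$ are consumed) guarantees that the $\delta_{J-1}$-component of $\Phi^\Lambda_{\mathcal Z_{J-1}}(b-c)$ is nonzero, and the result follows by finite induction on the rows. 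You instead add all $J$ rows back at once and split the conclusion into dense range (M\"untz--Sz\'asz applied through duality to the vanishing-moment problem $\int x^{\lambda_k}\,d\mu=0$ for the finite atomic measure $\mu=\sum_j w_j\sqrt{1-z_j^2}\,\delta_{z_j}$, using that each $z_m$ is isolated) plus closed range (a finite-dimensional argument exploiting that $P_J\Phi^\Lambda_{\mathcal Z}$ maps into $\CC^J$ while $Q_J\Phi^\Lambda_{\mathcal Z}$ is onto $V_J$). Both proofs rest on the same two pillars --- the full M\"untz--Sz\'asz theorem and $\sum_j(1-z_j^2)<\infty$ --- but the paper's version is constructive and quantitative (it exhibits an explicit near-preimage of $\delta_{J-1}$ with norm control), whereas yours is softer and cleanly decouples the two hypotheses: the surjectivity of the tail is used only for closedness, and \eqref{MSCond} only for density. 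Two small points to tidy: the element $d$ in your closedness step is a $\ker B$-preimage of $\lim_n Ad_n$ rather than a limit of the $d_n$ themselves (which need not converge), and both halves of your argument tacitly use boundedness of $\Phi^\Lambda_{\mathcal Z}$, which is where hypothesis \eqref{techcond2} actually enters your version.
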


\begin{proof}
     Observe that 
     assumption \eqref{techcond2} implies that the matrices $\Phi^\Lambda_{\mathcal Z_n}$, $n\in\N$, have square-summable rows with a finite supremum of their norms. Moreover, Carleson's condition \eqref{CCond} implies that they also have square-summable columns with a finite supremum of their norms. Since we assumed that $\Phi^\Lambda_{\mathcal Z_J}$ is bounded (i.e.~defines a bounded operator on $\ell^2(\N_0)$), it follows that all the matrices $\Phi^\Lambda_{\mathcal Z_n}$, $n\in\N$, are also bounded.
     Thus, to complete the proof it suffices to show that 
     the matrix $\Phi^\Lambda_{\mathcal Z_{J-1}}$ is surjective. Indeed, the lemma would then follow after finitely many successive applications of this result.

Given $c \in\ell^2(\N_0)$, let $\vartheta_c: \DD\to \CC$ denote the  function 
\[
\vartheta_c(z) = \sum_{k=0}^\infty c_k z^{\lambda_k}.
\]
Clearly, $\vartheta_c$ is well defined and analytic by the assumption on $\vartheta$. 
Identifying the matrix $\Phi^\Lambda_{\mathcal{Z}_n}$ with the operator from $\ell^2(\N_0)$ to $V_n$, we get
\begin{equation*}
    \Phi^\Lambda_{\mathcal{Z}_n} c 
    = \sum_{j = n}^\infty\sum_{k=0}^\infty c_k  z_j^{\lambda_k}\sqrt{1-z_j^2}\delta_j 
    = \sum_{j = n}^\infty \vartheta_c(z_j)\sqrt{1-z_j^2}\delta_j, \ n\in\N. 
\end{equation*}

We will establish the surjectivity of the operator $\Phi^\Lambda_{\mathcal{Z}_{J-1}}:\ell^2(\N_0) \to V_{J-1}$ utilizing the surjectivity of $\Phi^\Lambda_{\mathcal{Z}_{J}}:\ell^2(\N_0) \to V_{J}$.
It suffices to show that $\delta_{J-1}$ is in the image of  
the operator $\Phi^\Lambda_{\mathcal{Z}_{J-1}}$.

     Our assumptions imply that the system
    $\{P_{V_J}D^{\lambda_k}g\}_{k=0}^\infty$ forms a frame for $V_J := \overline{\text{span}}\{\delta_j\}_{j \geq J}$; we will use its frame bounds $B \geq A > 0$. In particular, the operator $\Phi^\Lambda_{\mathcal{Z}_{J}}(\Phi^\Lambda_{\mathcal{Z}_{J}})^*$ is invertible and we have
    \[
    \left\|(\Phi^\Lambda_{\mathcal{Z}_{J}})^*\left(\Phi^\Lambda_{\mathcal{Z}_{J}}(\Phi^\Lambda_{\mathcal{Z}_{J}})^*\right)^{-1} \right\| \le \frac{\sqrt{B}}{A}.
    \]

Define $M:= \sqrt{\sum_{j = 0}^\infty (1-z_j^2)}$ and
    \begin{equation}\label{gamest}
    \gamma := 
    \sqrt{(1-z_{J-1}^2)\vartheta(z^2_{J-1})},
    \end{equation}
and choose $\varepsilon > 0$ such that 
\[
\varepsilon < \frac{1}{2}\bigg(1+\gamma M\frac{\sqrt{B}}{A}\bigg)^{-1}.
\]
By the full M\"untz-Sz\'asz theorem 
 \cite{BE96}, we can find a continuous function 
\[
\vartheta_b(z) = \sum_{k = 0}^\infty b_kz^{\lambda_k}, \ z\in [0,1],
\]
(where $b = (b_k)_{k=0}^\infty$ is finitely supported) such that 
\[
\left|\sqrt{1-z_{J-1}^2}\vartheta_b(z_{J-1}) - 1\right| < \varepsilon
\]
and $|\vartheta_b(z_j)| < \varepsilon$ for all $j \geq J$. 
Let \[c = (\Phi^\Lambda_{\mathcal{Z}_{J}})^*\left(\Phi^\Lambda_{\mathcal{Z}_{J}}(\Phi^\Lambda_{\mathcal{Z}_{J}})^*\right)^{-1}\Phi^\Lambda_{\mathcal{Z}_{J}}b\] so that
 $c = (c_k)_{k=0}^\infty \in \ell^2(\N_0)$ satisfies 
\[
\Phi^\Lambda_{\mathcal{Z}_J} c = \Phi^\Lambda_{\mathcal{Z}_{J}}b = \sum_{j=J}^\infty \vartheta_b(z_j)\sqrt{1-z_j^2}\delta_j
\]
and  
\[
\|c\| \leq \frac{\sqrt{B}}{A} \bigg(\sum_{j = J}^\infty |\vartheta_b(z_j)|^2(1-z_j^2)\bigg)^{1/2} < \frac{\sqrt{B}}{A} \varepsilon M.
\]
Then 
\begin{align*}
    \Phi^\Lambda_{\mathcal{Z}_{J-1}}(b - c) &= \sqrt{1-z_{J-1}^2}\big(\vartheta_b(z_{J-1})-\vartheta_{c}(z_{J-1})\big)\delta_{J-1},
\end{align*}
and 
\begin{align*}
    \bigg|\sqrt{1-z_{J-1}^2}\big(\vartheta_b(z_{J-1}) &-\vartheta_{c}(z_{J-1})\big) \bigg| 
    \\
    &\geq \sqrt{1-z_{J-1}^2}|\vartheta_b(z_{J-1})| - \sqrt{1-z_{J-1}^2}\bigg|\sum_{k = 0}^\infty c_kz^{\lambda_k}_{J-1}\bigg| \\
    &> 1-\varepsilon - \|c\|\sqrt{(1-z_{J-1}^2)\sum_{k = 0}^\infty z^{2\lambda_k}_{J-1}} \\
    &\geq 1 - \varepsilon - \gamma\varepsilon M \frac{\sqrt{B}}{A}
    > 0
\end{align*}
by the choice of $\varepsilon$. We conclude that $\delta_{J-1}$
is in the range of $\Phi^\Lambda_{\mathcal{Z}_{J-1}}$, and the proof is complete.
\end{proof}

Our main result follows as a combination of Lemmas \ref{lemma_projection} and \ref{MSlemma}. Indeed, from Lemma \ref{lemma_projection}, we deduce that the matrix obtained by removing the first $J$ rows of the synthesis matrix is surjective. Lemma \ref{MSlemma} then allows us to add the deleted rows of the matrix one-by-one while preserving the surjectivity. The frame property then follows by Proposition \ref{proposition_frame_equivalent}.

\begin{thm}\label{theorem_main}
    Let $\Lambda = \{Nk + j_k\}_{k \in \N_0}\subset [0,\infty)$,  where $N \in \N$ and $j_k \in[0, N)$. Then the sequence $\{D^\lambda g\}_{\lambda \in \Lambda}$ forms a frame for $\ell^2(\N_0)$.  
\end{thm}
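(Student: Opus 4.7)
The plan is to prove surjectivity of the synthesis operator $\Phi^\Lambda_{\mathcal Z}$ and combine this with the Bessel property (which, as noted at the start of Section \ref{sec:synthesis}, comes for free for these systems) via Proposition \ref{proposition_frame_equivalent}. The two lemmas in this section are tailored exactly for this purpose: Lemma \ref{lemma_projection} produces a threshold $J$ for which $\{P_{V_J}D^\lambda g\}_{\lambda\in\Lambda}$ is a frame for $V_J$, equivalently $\Phi^\Lambda_{\mathcal Z_J}\colon\ell^2(\N_0)\to V_J$ is surjective, while Lemma \ref{MSlemma} reinstates the deleted coordinates $\delta_{J-1},\delta_{J-2},\ldots,\delta_0$ one at a time, preserving surjectivity at each step.

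To put this machinery into gear I need to verify that $\Lambda=\{Nk+j_k\}$ with $j_k\in[0,N)$ satisfies the hypotheses of Lemma \ref{MSlemma}. Since $Nk\le\lambda_k<N(k+1)$, we have $\lambda_k/(\lambda_k^2+1)\asymp 1/k$, so the series in the generalized M\"untz-Sz\'asz condition \eqref{MSCond} diverges by comparison with the harmonic series. For the technical condition \eqref{techcond2}, the lower bound $\lambda_k\ge Nk$ and the monotonicity of $z\mapsto z^\lambda$ on $[0,1)$ yield
\[
\vartheta(z)=\sum_{k=0}^\infty z^{\lambda_k}\le \sum_{k=0}^\infty z^{Nk}=\frac{1}{1-z^N},
\]
so $(1-z)\vartheta(z)\le (1-z)/(1-z^N)\le 1$ uniformly in $z\in[0,1)$.

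The proof then assembles in three lines: apply Lemma \ref{lemma_projection} to obtain a $J$ for which $\Phi^\Lambda_{\mathcal Z_J}$ is surjective; apply Lemma \ref{MSlemma} to conclude that $\Phi^\Lambda_{\mathcal Z}\colon\ell^2(\N_0)\to\ell^2(\N_0)$ is surjective; invoke Proposition \ref{proposition_frame_equivalent} to deduce the frame property. Since the genuinely hard work has already been absorbed into the two lemmas, I do not anticipate any real obstacle in the final assembly; the only ingredient unique to the theorem's step is the elementary verification of \eqref{MSCond} and \eqref{techcond2} above, which crucially exploits that the offsets $j_k$ stay within the bounded window $[0,N)$ so that $\lambda_k$ is sandwiched between $Nk$ and $N(k+1)$.
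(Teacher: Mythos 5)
Your proposal is correct and follows essentially the same route as the paper: verify the hypotheses of Lemma \ref{MSlemma} for $\Lambda=\{Nk+j_k\}$ (the paper uses the slightly cruder bound $\vartheta(z)\le\sum_k z^k=1/(1-z)$ where you use $1/(1-z^N)$, but both give \eqref{techcond2}), then chain Lemma \ref{lemma_projection}, Lemma \ref{MSlemma}, and Proposition \ref{proposition_frame_equivalent} exactly as the authors do.
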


\begin{proof}
We only need to verify that the assumptions of Lemma \ref{MSlemma} hold for $\Lambda = \{Nk + j_k\}_{k \in \N_0}$.
The generalized M\"untz-Sz\'asz condition holds for $\Lambda$ by comparison with the harmonic series. 
The condition on $\vartheta$ follows from 
\[
\sum_{k = 0}^\infty z^{\lambda_k} \le \sum_{k = 0}^\infty z^{k} = \frac1{1-z},\ z \in [0,1),
\]
so that $\sup_{z\in[0,1)} \vartheta(z){(1-z)} \le 1$ in \eqref{techcond2} and $\gamma \le 1$ in \eqref{gamest}. Finally, boundedness of the matrix $\Phi^\Lambda_{\mathcal Z_J}$ for some (and, hence, all) $J\in\N$ follows from Lemma \ref{lemma_projection}.
\end{proof}

\begin{cor}\label{lemma_general_lambda}
    Let $\Lambda \subset [0,\infty)$ be a countable subset and suppose that there exist constants $N,M \in \N$ such that $0 < |\Lambda \cap [Nk, N(k+1))| < M  $ for all $k \in \N_0$. Then $\{D^{\lambda}g\}_{\lambda \in \Lambda}$ forms a frame for $\ell^2(\N_0)$. 
\end{cor}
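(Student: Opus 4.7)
The plan is to reduce to Theorem \ref{theorem_main} by extracting a well-behaved subsequence, then handle the remaining elements via a Bessel-bound estimate. First I would use the lower cardinality condition (reading the interval as $[Nk,N(k+1))$ of length $N$) to select, for each $k\in\N_0$, a single element $\lambda_k \in \Lambda \cap [Nk, N(k+1))$. Writing $\lambda_k = Nk + j_k$ with $j_k \in [0,N)$ places this subsequence exactly in the setting of Theorem \ref{theorem_main}, so $\{D^{\lambda_k}g\}_{k \in \N_0}$ is already a frame with some lower frame bound $A>0$. Since enlarging a Bessel sequence never destroys its lower frame bound, the full system $\{D^\lambda g\}_{\lambda \in \Lambda}$ will automatically inherit $A$ as a lower frame bound once I know it is itself Bessel.

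The next step would be to establish the Bessel property for $\{D^\lambda g\}_{\lambda \in \Lambda}$ using the upper cardinality condition. Since $z_j \in (0,1)$ and every $\lambda \in [Nk, N(k+1))$ satisfies $\lambda \ge Nk$, the elementary estimate $z_j^\lambda \le z_j^{Nk}$ gives, for any $f = (f_j) \in \ell^2(\N_0)$, the pointwise bound
\[
|\langle f, D^\lambda g\rangle| \le \sum_{j=0}^\infty |f_j|\,z_j^\lambda\sqrt{1-z_j^2} \le \sum_{j=0}^\infty |f_j|\,z_j^{Nk}\sqrt{1-z_j^2} = \langle |f|, D^{Nk}g\rangle,
\]
where $|f| := (|f_j|)_{j=0}^\infty$ has the same $\ell^2$-norm as $f$. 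Grouping the sum over $\Lambda$ by the intervals $[Nk,N(k+1))$ and applying $|\Lambda \cap [Nk, N(k+1))| < M$, I would then obtain
\[
\sum_{\lambda \in \Lambda}|\langle f, D^\lambda g\rangle|^2 \le M\sum_{k=0}^\infty \bigl|\langle |f|, D^{Nk}g\rangle\bigr|^2 \le MB'\|f\|^2,
\]
where $B'$ is the Bessel bound for the frame $\{D^{Nk}g\}_{k \in \N_0}$ supplied by \cite[Theorem~2.1]{CHPS24}. Combining this upper bound with the lower bound inherited from the first step yields the frame property via Proposition \ref{proposition_frame_equivalent}.

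Because the entire argument reduces to Theorem \ref{theorem_main} together with a pointwise monomial estimate, I do not expect any significant obstacle. The only care needed is to invoke the positivity of the Carleson spectrum when comparing $z_j^\lambda$ with $z_j^{Nk}$; this hypothesis is already the standing assumption of Section \ref{sec:synthesis}, so the reduction is immediate and no new technical machinery is required.
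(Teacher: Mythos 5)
Your proposal is correct and follows essentially the same route as the paper: extract a single element $\lambda_k$ from each block $[Nk,N(k+1))$ to get a frame via Theorem \ref{theorem_main}, and use the upper cardinality bound together with the monotonicity $z_j^\lambda \le z_j^{Nk}$ to establish the Bessel property, so that the lower frame bound of the subsequence transfers to the full system. The paper's proof merely asserts the Bessel claim without the explicit estimate you supply, but the underlying argument is identical.
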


\begin{proof}
    The hypotheses guarantee that $\{D^{\lambda}g\}_{\lambda \in \Lambda}$ is a Bessel sequence and $\Lambda$ contains a subsequence $\lambda_0,\lambda_1,...$ such that $0 \leq \lambda_k - Nk < N-1$ for all $k \in \N_0$. It follows from Theorem \ref{theorem_main} that $\{D^{\lambda}g\}_{\lambda \in \Lambda}$ has a lower frame bound.
\end{proof}

The results of this section can be viewed as a strengthening of Carleson's original Theorem on Bases and Interpolation \cite{NN2012} for positive real Carleson sequences. By $H^2 = H^2(\DD)$ we denote the Hardy space of analytic functions of $\DD$ with square-summable power series. Given a sequence $\mathcal Z = \{z_j\}_{j \in \N_0}$ satisfying $\sum_{j} 1-|z_j| < \infty$ and $z_j \neq 0$, we may form the Blaschke product 
\[
B_{\mathcal Z}(z) = \prod_{j \in \N_0} \frac{z_j}{|z_j|} \frac{z-z_j}{1-\overline{z_j}z}.
\]
Note that any Carleson sequence $\{z_j\}_{j\in \N_0}$ automatically satisfies $\sum_{j} 1-|z_j| < \infty$. Finally, given $\Lambda \subset \N_0$, let us denote $H^2_\Lambda = \overline{\text{span}}\{z^\lambda\}_{\lambda \in \Lambda} \subset H^2$.

\begin{thm}
    Let $\mathcal Z = \{z_j\}_{j \in \N_0}$ be a sequence of positive real numbers with each $z_j \in (0,1)$. Suppose that $\Lambda \subset \N_0$ and  there exists a constant $N \in \N$ such that $|\Lambda \cap [Nk, N(k+1))| > 0$ for all $k \in \N_0$. Then the following statements are equivalent. 
    \begin{enumerate}
        \item The sequence $\mathcal Z$ is a Carleson sequence. 

        \item The set of functions 
        \[
        \bigg\{\frac{B_{\mathcal Z}}{z-z_j} \bigg\}_{j \in \N_0}
        \]
        forms a Riesz basis for $H^2 \ominus B_{\mathcal Z}H^2$.
        
        \item Given any $(a_j)_{j = 0}^\infty \in \ell^2(\N_0)$, there exists $f \in H^2_\Lambda$ such that $a_j = f(z_j)\sqrt{1-z_j^2}$ for all $j \in \N_0$. 
    \end{enumerate}
\end{thm}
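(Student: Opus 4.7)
The plan is to set up a dictionary between the synthesis-operator framework of Section \ref{sec:synthesis} and the classical interpolation problem for $H^2$, after which all three equivalences reduce to results already in hand. The equivalence $(1)\Leftrightarrow(2)$ is Carleson's theorem on bases and interpolation in its classical Shapiro--Shields form (see \cite{NN2012}); it is independent of $\Lambda$ and can be invoked verbatim. The genuinely new content lies in $(1)\Leftrightarrow(3)$, which I would derive by identifying (3) with the surjectivity of the synthesis matrix $\Phi^\Lambda_{\mathcal Z}$.

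Concretely, since $\Lambda\subset\N_0$ consists of distinct nonnegative integers, the assignment $c=(c_k)_{k\in\N_0}\mapsto \vartheta_c(z)=\sum_k c_k z^{\lambda_k}$ is, by definition of $H^2_\Lambda$, a unitary isomorphism $\ell^2(\N_0)\to H^2_\Lambda$. Inserting this into the formula
\[
\Phi^\Lambda_{\mathcal Z}\, c \;=\; \sum_{j\in\N_0} \vartheta_c(z_j)\sqrt{1-z_j^2}\,\delta_j
\]
already derived in the proof of Lemma \ref{MSlemma} shows that $\Phi^\Lambda_{\mathcal Z}$ is surjective onto $\ell^2(\N_0)$ if and only if for every $(a_j)\in\ell^2(\N_0)$ there is an $f\in H^2_\Lambda$ with $f(z_j)\sqrt{1-z_j^2}=a_j$, which is precisely (3).

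With this dictionary, $(1)\Rightarrow(3)$ is immediate from Corollary \ref{lemma_general_lambda} together with Proposition \ref{proposition_frame_equivalent}: the assumptions on $\Lambda$ and the Carleson condition on $\mathcal Z$ yield a frame $\{D^\lambda g\}_{\lambda\in\Lambda}$ for $\ell^2(\N_0)$, whose synthesis operator is therefore surjective. Conversely, the inclusion $H^2_\Lambda\subseteq H^2$ makes $(3)\Rightarrow(1)$ immediate on the analytic side: (3) entails the classical free interpolation property for all of $H^2$, which by the ``only if'' direction of Carleson's theorem forces $\mathcal Z$ to satisfy \eqref{CCond}. There is no real obstacle in any single step; the substantive point is recognizing that Corollary \ref{lemma_general_lambda} is a subspace-restricted Shapiro--Shields interpolation theorem in disguise, so that the theorem amounts to packaging the results of Section \ref{sec:synthesis} in the language of $H^2$.
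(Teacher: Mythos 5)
Your proposal is correct and matches the paper's (implicit) argument: the paper states this theorem with no written proof, presenting it as a repackaging of Corollary \ref{lemma_general_lambda} together with the classical Carleson/Shapiro--Shields interpolation theorem, which is exactly the dictionary you construct. The key point --- identifying statement (3) with surjectivity of $\Phi^\Lambda_{\mathcal Z}$ via the unitary $c\mapsto\vartheta_c$ from $\ell^2(\N_0)$ onto $H^2_\Lambda$ --- is handled correctly.
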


\begin{rem}
    \emph{Consider a (not necessarily real) Carleson sequence 
    $\mathcal Z = \{z_i\}_{i\in\N_0}$ 
    such that $z_k^N = z_\ell^N$ for some $k \neq \ell$. The analysis operator corresponding to $\{D^{Nk}g\}_{k \in \N_0}$ has a matrix representation with $ij$-entry given by}
    \[
    \overline{z}^{Ni}_j\sqrt{1-|z_j|^2}.
    \]
    \emph{Hence, the $k^{th}$ column is a scalar multiple of the $\ell^{th}$ column, and thus the matrix cannot be bounded below. By Proposition \ref{proposition_frame_equivalent}, this means $\{D^{Nk}g\}_{k \in \N_0}$ cannot form a frame. This example shows that one must exercise caution when attempting to generalize the results in this section. However, in the case when $\Lambda\subset \N_0$, to  prove Lemma \ref{MSlemma} one only needs to find a function $\vartheta \in H^2_\Lambda$ such that $\vartheta(z_{J-1}) \neq 0$ and $\vartheta(z_j) = 0$ for all $j\geq J$. This suggests that the proof technique may be extended beyond positive real Carleson sequences, as the next proposition shows.}
\end{rem}

\begin{prop}
    Fix $J \in \N$ and let $\mathcal Z_J = \{z_j\}_{j \geq J}$ be a Carleson sequence with $z_j \in \DD$ and $|z_j| < |z_{j+1}|$ for all $j \geq J$. Assume also that $\Lambda \subset \N_0$. 
    If the matrix $\Phi^\Lambda_{\mathcal Z_J}$ is surjective, then so is the matrix $\Phi^\Lambda_{\mathcal Z}$ where $\mathcal{Z} = \{z_0,...,z_{J-1}\} \cup \mathcal{Z}_J$ for all but countably many choices of $z_0,...,z_{J-1} \in \DD$. 
\end{prop}

\begin{proof}
    An easy calculation shows that the columns of the matrix $\Phi^\Lambda_{\mathcal Z_J}$ tend to $0$, which implies that $\Phi^\Lambda_{\mathcal Z_J}$ is not injective since it is the synthesis operator for a frame. This means we may choose $0 \neq c =  (c_j)_{j =0}^\infty \in \text{Ker}(\Phi^\Lambda_{\mathcal{Z}_J})$; that is 
    \[
    0 = (\Phi^\Lambda_{\mathcal{Z}_J}c)_j = \vartheta_c(z_j) = \sum_{\lambda \in \Lambda} c_{\lambda}z_j^{\lambda}  
    \]
     for all $j \geq J$. If $\vartheta_c(z_{J-1}) \neq 0$, then 
    \[
    \Phi^\Lambda_{\mathcal{Z}_{J-1}} c = \vartheta_c(z_{J-1})\sqrt{1-|z_{J-1}|^2}\delta_{J-1}\neq 0.
    \]
    Thus $\delta_{J-1}\in \text{Im}(\Phi^\Lambda_{\mathcal{Z}_{J-1}})$, so $\Phi^\Lambda_{\mathcal{Z}_{J-1}}$ is surjective. Finally, we only need to note that $\vartheta_c \in H^2$, and hence has a countable zero set. The result now follows by induction. 
\end{proof}

To clarify the above result, we note that it is an analog of Lemma \ref{MSlemma}, which allows us to add complex values to the Carleson spectrum as long as we avoid a small pathological set.  We also
note that this pathological set of values for $z_0,...,z_{J-1}$  is a subset of a finite union of the zero sets of nonzero $H^2$ functions. Therefore, not only is the set countable as stated in the proposition, but in fact it finitely intersects each open ball $B_r(0)$ for $r < 1$. The strength of Lemma \ref{MSlemma} is in the argument that shows that this set does not intersect the interval $(0,1)$. 

\section{Carleson Systems: Analysis Operator Perspective}\label{sec:analysis}

The discrepancy in the assumptions on the set $\Lambda \subset [0,\infty)$ in Lemmas \ref{lemma_projection} and \ref{MSlemma} warrants additional investigation. In this section, we use the analysis operator $(\Phi^\Lambda_{\mathcal Z})^*$, $\mathcal Z \subset \DD$, to establish the completeness of the system $\{D^\lambda g\}_{\lambda \in \Lambda}$ for a set $\Lambda$ of sufficiently big logarithmic block density.

\begin{defn}
    For a set $\Lambda \subset [0,\infty)$, its \emph{logarithmic block density} $L(\Lambda)$ is defined by
    \begin{equation}
        L(\Lambda) = \inf_{\mu > 1} \frac{1}{\log\mu}\limsup_{t\to\infty} \sum_{\lambda\in\Lambda\cap[t, \mu t]}\frac1\lambda.
    \end{equation}
\end{defn}

Observe that $L(\Lambda) > 0$ implies the generalized M\"untz-Sz\'asz condition. For sets that satisfy the assumption of Lemma \ref{lemma_projection}, we have $L(\Lambda) = \frac1N$. 

The main theorem of this section is based on the following result, which L.~A.~Rubel established in \cite[\S 3]{R56} en route to generalizing   Carlson's Theorem on Entire Functions. 

\begin{lem}\label{lemma_rubel}
    Assume that $\Lambda \subset [0,\infty)$ and $f$ is a non-zero entire function satisfying the following three conditions:
    \begin{enumerate}
        \item $|f(z)| \le Me^{\tau|z|}$, $z\in\CC$, for some $M, \tau > 0$;
        \item $|f(iy)| \le Me^{c|y|}$, $y\in\R$, for some $M > 0$ and $c < \pi$;
        \item $f(\lambda)  = 0$ for all $\lambda\in\Lambda$.
    \end{enumerate}
    Then $L(\Lambda) \le \frac c\pi$.
\end{lem}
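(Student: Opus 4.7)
The plan is to apply Carleman's formula for the right half-disk $\{|z|\le R,\ \operatorname{Re} z\ge 0\}$. Its decisive advantage over Jensen's formula on the full disk is that a zero $\rho = re^{i\theta}$ is weighted by $\cos\theta$, so the zeros $\lambda\in\Lambda\subset[0,\infty)$ receive full weight while any contribution from the imaginary axis vanishes. Carleman's identity is therefore precisely the tool for converting the imaginary-axis growth hypothesis~(2) into a density bound on $\Lambda$.

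After a routine reduction to the case $f(0)\neq 0$ (factor out any finite-order zero of $f$ at the origin) together with a harmless normalization at the origin, I would apply the formula to extract an inequality of the form
\[
\sum_{\lambda\in\Lambda,\,\lambda\le R}\Bigl(\tfrac{1}{\lambda}-\tfrac{\lambda}{R^{2}}\Bigr)
\,\le\, \tfrac{1}{2\pi}\!\int_{-R}^{R}\Bigl(\tfrac{1}{y^{2}}-\tfrac{1}{R^{2}}\Bigr)\log|f(iy)|\,dy
+\tfrac{1}{\pi R}\!\int_{-\pi/2}^{\pi/2}\log|f(Re^{i\theta})|\cos\theta\,d\theta + O(1),
\]
where the non-negative contributions from any other zeros of $f$ in the half-disk have been dropped. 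Hypothesis~(1) bounds the semicircular integrand by $\log M + \tau R$; after division by $\pi R$ this contribution is uniformly $O(1)$ as $R\to\infty$. Hypothesis~(2) bounds $\log|f(\pm iy)|$ by $\log M + c|y|$; the essential elementary computation is that the $c|y|$ term, integrated against the Carleman kernel $(y^{-2}-R^{-2})$, evaluates to $\tfrac{c}{\pi}\log R + O(1)$. Combining these yields
\[
\sum_{\lambda\in\Lambda,\,\lambda\le R}\Bigl(\tfrac{1}{\lambda}-\tfrac{\lambda}{R^{2}}\Bigr) \,\le\, \tfrac{c}{\pi}\log R + O(1).
\]

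To convert this asymptotic bound into a statement about the logarithmic block density, I would fix $\mu>1$ and apply the bound at the two radii $R=\mu t$ and $R=t$. The two $\tfrac{c}{\pi}\log t$ contributions on the right then cancel, and on the left the subtraction isolates precisely a sum over the annulus $\lambda\in\Lambda\cap[t,\mu t]$, up to lower-order cross terms in $\lambda/R^{2}$ and $\lambda/t^{2}$. Handled carefully, this should produce
\[
\sum_{\lambda\in\Lambda\cap[t,\mu t]}\tfrac{1}{\lambda}\,\le\,\tfrac{c}{\pi}\log\mu + o(1)\quad (t\to\infty),
\]
after which taking $\limsup_{t\to\infty}$, dividing by $\log\mu$, and then taking the infimum over $\mu>1$ gives $L(\Lambda)\le c/\pi$.

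The main obstacle I anticipate is the annular subtraction step: Carleman's bound is one-sided, so subtracting two upper bounds does not automatically yield an upper bound on the annular tail — the cancellation must come from a \emph{differenced Carleman identity}, or equivalently from a careful sign-by-sign analysis of the cross terms $\lambda/R^{2}$ and $\lambda/t^{2}$. The sharp prefactor $\tfrac{c}{\pi}$ is however never in doubt, since it arises from a single elementary integration against the Carleman kernel; it matches the classical Carlson threshold $c=\pi$ exactly when $L(\Lambda)=1$, as in the archetypal case $\Lambda=\N_{0}$.
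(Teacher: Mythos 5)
First, a framing note: the paper does not prove this lemma at all --- it is quoted from Rubel \cite{R56}, with only a remark that Rubel's restriction to $\Lambda\subseteq\N$ is inessential. So your proposal has to stand on its own. Your starting point is the right one: Carleman's formula for the right half-disk is indeed the natural engine here, and your derivation of
\[
\sum_{\lambda\in\Lambda,\ \lambda\le R}\Bigl(\frac{1}{\lambda}-\frac{\lambda}{R^{2}}\Bigr)\le\frac{c}{\pi}\log R+O(1)
\]
is correct: the semicircular term contributes $\frac{2\tau}{\pi}+o(1)$ and the kernel integral against $c|y|$ gives $\frac{c}{\pi}\log R+O(1)$.

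The obstacle you flag at the end, however, is fatal and cannot be repaired by a more careful treatment of the cross terms. The passage from the displayed global inequality to $\sum_{\Lambda\cap[t,\mu t]}\lambda^{-1}\le\frac{c}{\pi}\log\mu+o(1)$ is not merely delicate --- as a purely real-variable implication it is false. Take $\Lambda$ supported on the sparse blocks $[T_n,T_n^2]$ with $T_{n+1}=T_n^{100}$, carrying internal logarithmic density $D'$ on each block and nothing in between. Then $\sum_{\lambda\le R}\lambda^{-1}\le\bigl(\frac{50}{99}+o(1)\bigr)D'\log R$ for all large $R$, so the Carleman-type bound holds with constant about $\frac{1}{2}D'$, yet every window $[t,\mu t]$ inside a block carries mass $D'\log\mu$, so $L(\Lambda)=D'$. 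The block density is a $\limsup$ of \emph{local} densities and simply is not controlled by the averaged zero count that a single application of Carleman's formula at radius $R$ measures. The missing idea is a \emph{localized} Carleman estimate that re-uses the analyticity of $f$ at every scale: for instance, apply Carleman's formula to the rescalings $f_t(z)=f(tz)$ on a fixed half-annulus $\{1/K\le|z|\le K,\ \mathrm{Re}\,z\ge 0\}$. Hypothesis (1) then enters only through the two bounding semicircles and contributes an additive error $O(\tau)$ \emph{per block}, uniformly in $t$, giving $\sum_{\Lambda\cap[t,\mu t]}\lambda^{-1}\le\frac{c}{\pi}\log\mu+C(\tau)+o(1)$. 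The constant $C(\tau)$ cannot be removed, but it does not have to be: one divides by $\log\mu$ and lets $\mu\to\infty$, i.e.\ one uses the $\inf_{\mu>1}$ built into the definition of $L(\Lambda)$ --- which your argument never invokes. Some localization of this kind is the substantive content of Rubel's proof and is what your proposal is missing.
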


\begin{rem}
    In his derivations in \cite[\S 3]{R56}, Rubel was only concerned with the case of $\Lambda \subseteq \N$. This assumption, however, is unnecessary as follows from his computations and remark in \cite[\S 4]{R56}.
\end{rem}

The conclusion of Lemma \ref{lemma_rubel} can be strengthened if the function $f$ is bounded on the imaginary access as follows from \cite[Theorem~V.11]{Levin80}.

\begin{lem}
    \label{lemma_levin}
    Suppose that the assumptions of Lemma \ref{lemma_rubel} hold with $c=0$. Then
    \[
    \sum_{\lambda\in \Lambda\setminus\{0\}}\frac1\lambda < \infty.
    \]
\end{lem}

Lemma \ref{lemma_rubel} is all we need to establish the following completeness result.

\begin{thm}\label{theorem_completeness}
    Assume that $\mathcal Z \subset\DD_c$ is a Carleson sequence, where
    \[
    \DD_c = \{z = re^{i\theta}\in \DD\setminus\{0\}: \theta\in[-c, c]\}, \ c\in [0,\pi).
    \]
    Assume also that $\Lambda 
    \subset [0,\infty)$ satisfies $L(\Lambda) > \frac c\pi$.
    Then the system $\{D^\lambda g\}_{\lambda \in \Lambda}$ is complete in $\ell^2(\N_0)$.
\end{thm}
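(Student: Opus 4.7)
I would argue by contradiction using Lemma~\ref{lemma_rubel}. Suppose a nonzero $h = (h_j)_{j\in\N_0} \in \ell^2(\N_0)$ is orthogonal to every $D^\lambda g$, so that
\[
\sum_{j=0}^\infty h_j\sqrt{1-|z_j|^2}\,\overline{z_j}^{\,\lambda} = 0 \qquad (\lambda \in \Lambda).
\]
The plan is to realize this left-hand side as the trace on $\Lambda$ of an entire function $F$ satisfying hypotheses (1)--(3) of Lemma~\ref{lemma_rubel}, derive $F\equiv 0$ from $L(\Lambda)>c/\pi$, and finally use uniqueness of the Laplace transform (equivalently, of polynomial moments) to conclude $h=0$.

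Define $F(s) := \sum_j h_j\sqrt{1-|z_j|^2}\,\overline{z_j}^{\,s}$ with $\overline{z_j}^{\,s} := \exp(s\log\overline{z_j})$ computed via the principal branch, which is unambiguous because $|\arg \overline{z_j}| \le c < \pi$. The crucial preliminary observation is that $r_* := \inf_j |z_j| > 0$: since $\mathcal Z$ is a Carleson sequence, only finitely many $z_j$ lie in $\{|z|\le \tfrac12\}$, and each of these is nonzero by the hypothesis $\mathcal Z \subset \DD_c \subset \DD\setminus\{0\}$. Combined with the standard Carleson consequence $S := \sum_j (1-|z_j|^2) < \infty$, a Cauchy--Schwarz estimate on partial sums yields the uniform bound
\[
|F(s)| \le \|h\|\sqrt S \,\exp\bigl(|\mathrm{Re}\,s|\log(1/r_*) + c\,|\mathrm{Im}\,s|\bigr),\qquad s \in \CC.
\]
Hence the partial sums converge to $F$ locally uniformly, making $F$ entire of exponential type, which verifies hypothesis (1) of Lemma~\ref{lemma_rubel}. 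Restricting to the imaginary axis and exploiting $|\arg\overline{z_j}| \le c$ yields $|F(it)| \le \|h\|\sqrt S\, e^{c|t|}$, which is hypothesis (2) with the same constant $c$ coming from the definition of $\DD_c$. Hypothesis (3) is simply the orthogonality assumption on $h$.

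Since $L(\Lambda) > c/\pi$, the contrapositive of Lemma~\ref{lemma_rubel} forces $F \equiv 0$ on $\CC$. Setting $c_j := h_j\sqrt{1-|z_j|^2}$ and $\mu_j := -\log \overline{z_j}$, we have $\sum_j |c_j| \le \|h\|\sqrt S < \infty$, the $\mu_j$ are distinct (Carleson sequences consist of pairwise distinct points) and contained in the bounded rectangle $(0,\log(1/r_*)] \times [-c,c]$, and $F$ is the Laplace transform of the finite compactly supported discrete measure $\sigma := \sum_j c_j \delta_{\mu_j}$. By injectivity of the Laplace transform on such measures---equivalently, by applying Mergelyan's theorem to the closure $\overline{\{\mu_j\}}$ (which has empty interior and connected complement in $\CC$) in combination with the vanishing moments $\sum_j c_j \mu_j^n = (-1)^n F^{(n)}(0) = 0$---one concludes $\sigma \equiv 0$, whence $h_j = 0$ for every $j$, contradicting $h \neq 0$.

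The central obstacle is the construction of $F$ as an \emph{entire} function rather than one merely analytic on the right half-plane, since Lemma~\ref{lemma_rubel} is stated for entire functions. The crucial non-routine ingredient is that any Carleson sequence of nonzero points in $\DD$ is bounded away from $0$; this is precisely what permits the defining series for $F$ to converge also for $\mathrm{Re}(s) < 0$, where the factors $\overline{z_j}^{\,s}$ would otherwise blow up as $|z_j| \to 1$. A secondary subtlety is extracting $h = 0$ from $F \equiv 0$: because the exponents $\mu_j$ can accumulate on the imaginary axis, the most elementary uniqueness theorems for generalized Dirichlet series do not apply verbatim, and one must invoke either Laplace-transform uniqueness on compactly supported measures or Mergelyan's theorem on the closure $\overline{\{\mu_j\}}$.
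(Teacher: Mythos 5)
Your proof is correct and its core is the same as the paper's: assume a nonzero annihilator $h$, form the entire function $F(\omega)=\langle h, D^\omega g\rangle=\sum_j h_j\sqrt{1-|z_j|^2}\,\overline{z_j}^{\,\omega}$, verify the growth hypotheses of Lemma~\ref{lemma_rubel} using $\sum_j(1-|z_j|^2)<\infty$, $|\theta_j|\le c$, and $\inf_j|z_j|>0$, and contradict $L(\Lambda)>c/\pi$. Your explicit remark that $\inf_j|z_j|>0$ (forced by the Blaschke condition plus $0\notin\mathcal Z$) is what makes $F$ entire rather than merely analytic in a half-plane is a genuine improvement in exposition --- the paper asserts entirety from $\beta\in\ell^1$ without flagging this point. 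Where you diverge is the endgame. The paper never lets $F$ vanish identically: it observes that $F\not\equiv 0$ because $\{D^kg\}_{k\in\N_0}$ is a frame (hence complete), so Rubel's lemma applies directly and yields $L(\Lambda)\le c/\pi$, the desired contradiction. You instead conclude $F\equiv 0$ from the contrapositive and then work to extract $h=0$, which is logically equivalent but costs you a uniqueness theorem you do not need: once $F\equiv 0$ you have in particular $F(k)=0$ for every $k\in\N_0$, i.e.\ $h\perp D^kg$ for all integers $k$, and completeness of the canonical Carleson frame gives $h=0$ in one line.

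Two cautions about the step you did include. First, ``injectivity of the Laplace transform on compactly supported finite complex measures'' is false as a general principle: for instance $\oint_{|z|=1}e^{sz}\,dz=0$ for all $s$, so the nonzero measure $ie^{i\theta}\,d\theta$ on the unit circle has identically vanishing Fourier--Borel transform. Your argument survives only because of the special (thin, purely atomic) structure of the support, which is exactly what the Mergelyan/Lavrentiev variant exploits; so that variant is the one you must rely on, and it in turn requires verifying that $\overline{\{\mu_j\}}$ has connected complement (true here, since the set sits inside a countable set union the segment $i[-c,c]$, but it is an extra check). Second, recovering the individual coefficients from $\sigma=0$ uses that the $\mu_j$ are distinct, which you correctly note follows from the Carleson condition. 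None of this is wrong, but all of it is avoidable via the frame property already in hand.
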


\begin{proof}
We only consider countable sets $\Lambda$ (there is no loss of generality as we may always pass to a countable subset).    
Assume towards contradiction that the system $\{D^\lambda g\}_{\lambda \in \Lambda}$ is not complete. Then there exists a nonzero $b = (b_j)_{j=0}^\infty \in \ell^2(\N_0)$ such that $\langle b, D^\lambda g\rangle = 0$ for each $\lambda\in \Lambda$, i.e.~the analysis operator of $\{D^\lambda g\}_{\lambda \in \Lambda}$ satisfies $(\Phi^\Lambda_{\mathcal Z})^*b = 0$. Consider the function $\psi: \CC\to \CC$ given by
    \[
    \begin{split}
        \psi(\omega) = \langle b, D^\omega g\rangle & = \sum_{j\in\N_0} b_j\bar z_j^{\omega}\sqrt{1-|z_j|^2} \\
        &=  \sum_{j\in\N_0} \beta_j e^{\omega(\log r_j -i\theta_j)}, \ \beta_j = b_j\sqrt{1-|z_j|^2}.
    \end{split}
    \]
    Since $\beta = (\beta_j)_{j = 0}^\infty\in \ell^1(\N_0)$ by the Cauchy-Schwarz inequality, $\psi$ is an entire function of exponential type. We also have $\psi\not\equiv 0$ since the system $\{D^\lambda g\}_{\lambda \in \N_0}$ is a frame (we must have $\psi(n) \neq 0$ for at least one $n\in\N_0$). Thus, $\psi$ is a nonzero entire function that satisfies Assumption (1) of Lemma \ref{lemma_rubel}. Moreover, Assumption (2) of the lemma follows from $\mathcal Z \subset \mathbb D_c$, and Assumption (3) follows by construction since $\psi(\lambda) = \langle b, D^\lambda g\rangle = 0$, $\lambda\in\Lambda$. Thus, we arrived at a contradiction with the density assumption  $L(\Lambda) > \frac c\pi$  and the proof is complete.
\end{proof}

It is immediate from the above proof that Lemma \ref{lemma_levin} allows us to strengthen the result in the case when $\mathcal{Z}\subset (0,1)$.

\begin{thm}
    Assume that $\mathcal Z \subset(0,1)$ is a Carleson sequence and $\Lambda \subset [0,\infty)$ satisfies the M\"untz-Sz\'asz condition \[\sum_{\lambda\in \Lambda\setminus\{0\}}\frac1\lambda = \infty.\] 
    Then the system $\{D^\lambda g\}_{\lambda \in \Lambda}$ is complete in $\ell^2(\N_0)$.
\end{thm}

The results of this section underscore the relevance of the sampling and interpolation theory of entire functions in the context of Carleson systems. We intend to pursue this avenue of research in our future investigation.

\section{Frame Bounds and Continuous Carleson Frames}

In this section, we discuss the bounds of discrete and continuous Carleson frames.

Let us recall the definition of a continuous frame. 

\begin{defn}
    Given a measure space $(X,\mu)$, a family $\{f_t\}_{t\in X}$ of vectors in $\ell^2(\N_0)$ is said to be a continuous frame if the function $f \mapsto \ang{f,f_t}$ is measurable for each $t \in X$, and there exist constants $B\geq A>0$ such that 
    \[
    A\|f\|^2 \leq \int_X |\ang{f,f_t}|^2 d\mu(t) \leq B\|f\|^2
    \]
    for all $f \in \ell^2(\N_0)$. 
\end{defn}
\noindent For our purposes, we will only be interested in $X = [0,\infty)$ endowed with the Lebesgue measure. 

\begin{thm}[\protect{\cite[Theorem~5.1]{CHP20}}]\label{theorem_estimates}
    Let $\{z_j\}_{j \in \N_0}$ be a Carleson sequence with $z_j \in \DD$ and set 
    \[
    \delta : = \inf_{k} \prod_{j\neq k} \bigg|\frac{z_k - z_j}{1 - z_j\overline{z_k}} \bigg|.
    \]
    If $D \in B(\ell^2(\N_0))$ is the diagonal operator $D\delta_j = z_j\delta_j$, and $g$ is as in \eqref{eq_g}, then $\{D^kg\}_{k = 0}^\infty$ forms a frame for $\ell^2(\N_0)$ with frame bounds $\Delta \geq \Delta^{-1} > 0$ where 
    \begin{equation}
    \Delta := \frac{2(1 - 2\log{\delta)}}{\delta^4}.
    \end{equation}
\end{thm}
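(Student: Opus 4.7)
The plan is to reformulate the frame inequality for $\{D^k g\}_{k=0}^\infty$ as a Riesz-sequence inequality for normalized Cauchy kernels in the Hardy space $H^2(\DD)$, so that the classical quantitative Carleson interpolation theorem can deliver the explicit constant $\Delta$.

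First I would unpack the analysis operator. For $f=(f_j)_{j\in\N_0}\in\ell^2(\N_0)$, a direct computation gives
\[
\langle f,D^k g\rangle \;=\; \sum_{j\in\N_0} f_j\,\bar z_j^{\,k}\sqrt{1-|z_j|^2},
\]
so that the sequence $(\langle f,D^k g\rangle)_{k\in\N_0}$ is the Taylor coefficient sequence of the function $H_f(w)=\sum_j f_j\,k_{z_j}(w)$ in $H^2$, where $k_{z_j}(w)=\sqrt{1-|z_j|^2}\,(1-\bar z_j w)^{-1}$ is the normalized reproducing kernel at $z_j$. By Parseval, $\sum_k|\langle f,D^k g\rangle|^2=\|H_f\|_{H^2}^2$. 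Thus, in view of Proposition \ref{proposition_frame_equivalent}, it suffices to prove that the system $\{k_{z_j}\}_{j\in\N_0}$ of normalized Cauchy kernels is a Riesz sequence in $H^2$ with Riesz bounds $\Delta^{-1}$ and $\Delta$.

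Next I would handle the two bounds separately. For the upper bound, I would apply Schur's test to the Gram matrix
\[
G_{jk}=\langle k_{z_k},k_{z_j}\rangle_{H^2}=\frac{\sqrt{(1-|z_j|^2)(1-|z_k|^2)}}{1-z_k\bar z_j},
\]
using the Blaschke-product dominance $\prod_{j\neq k}|(z_k-z_j)/(1-\bar z_j z_k)|\ge\delta$ to control off-diagonal row and column sums by $\delta^{-2}$ (up to the logarithmic correction coming from the telescoping estimate $\sum_j(1-|z_j|^2)|1-\bar z_jz_k|^{-2}\lesssim(1+\log(1/\delta))\,\delta^{-2}$, which is where the factor $1-2\log\delta$ first enters). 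For the lower bound, I would invoke the quantitative form of Carleson's interpolation theorem (see, e.g., \cite[Lecture VII]{NN2012}): a Carleson sequence with constant $\delta$ admits a bounded linear interpolation operator $S\colon\ell^\infty\to H^\infty$ with norm controlled by $\delta^{-1}(1+\log(1/\delta))$. Dualizing (or combining $S$ with the orthogonal decomposition $H^2=K_{B_{\mathcal Z}}\oplus B_{\mathcal Z}H^2$) gives a bound of the form $\|H_f\|_{H^2}^2\ge \delta^{4}/(2(1-2\log\delta))\,\|f\|_{\ell^2}^2$, which is exactly $\Delta^{-1}\|f\|^2$.

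The main obstacle is the explicit bookkeeping that produces the precise constant $\Delta=2(1-2\log\delta)/\delta^4$. The $\delta^{-4}$ factor must arise as the square of the $\delta^{-2}$-type loss coming from sharpness of the Carleson constant in both the embedding and the interpolation step, while the logarithmic factor $(1-2\log\delta)$ tracks the loss in the Blaschke-product estimate near the boundary. One must be careful not to double-count these losses when combining the upper and lower bounds symmetrically so that the same $\Delta$ serves as the upper bound and $\Delta^{-1}$ as the lower bound; the cleanest route is probably to prove both bounds simultaneously from a single Gram-matrix estimate rather than to derive them independently.
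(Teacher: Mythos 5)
The paper does not actually prove this statement: it is imported verbatim as \cite[Theorem~5.1]{CHP20} (the bracketed attribution in the theorem header is the ``proof''), so there is no internal argument to compare yours against. Judged on its own terms, your opening reduction is correct and is precisely the standard mechanism behind the cited result: identifying $(\langle f,D^kg\rangle)_{k\in\N_0}$ with the Taylor coefficients of $H_f=\sum_j f_j k_{z_j}$ and applying Parseval converts the frame inequalities for $\{D^kg\}_{k=0}^\infty$ in $\ell^2(\N_0)$ exactly into the Riesz-sequence inequalities for the normalized reproducing kernels $\{k_{z_j}\}$ in $H^2$, which is how \cite{ACMT17} and \cite{CHP20} proceed (and is the same identification this paper exploits elsewhere, e.g.\ in the equivalence with Riesz bases of $H^2\ominus B_{\mathcal Z}H^2$). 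One cosmetic remark: the symmetry of the bounds $\Delta$ and $\Delta^{-1}$ is automatic for any frame (replace a pair $(A,B)$ by $(\max(B,A^{-1})^{-1},\max(B,A^{-1}))$), so your closing worry about ``not double-counting'' when making the two bounds reciprocal is a non-issue.

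The genuine gap is that the quantitative content --- which is the only thing this theorem asserts beyond Carleson's classical interpolation theorem --- is exactly the part you defer. Your outline (Schur's test on the Gram matrix for the upper bound, quantitative interpolation plus duality for the lower bound) is qualitatively right, and each step does degrade like $\delta^{-2}(1+\log(1/\delta))$, which is consistent with the $\delta^{-4}$ and the factor $1-2\log\delta$ in $\Delta$. But as written it can only deliver bounds of the form $C\,\delta^{-4}(1+\log(1/\delta))$ with an unspecified absolute constant, not the stated value $\Delta=2(1-2\log\delta)/\delta^4$; nothing in the sketch pins down the constant $2$ or the precise placement of the logarithm. To reproduce the theorem as stated you would have to carry out the explicit Shapiro--Shields-type estimates (as in \cite[Theorem~5.1]{CHP20} or \cite[Lecture VII]{NN2012}) rather than invoke them generically. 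Since the present paper only uses $\Delta$ as a black box in the continuous-frame computation of Section 4, citing \cite{CHP20} --- as the authors do --- is the intended resolution; a from-scratch derivation of the constant is not expected here.
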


Using the above estimates for the frame bounds, we  show that for positive real Carleson sequences, the family $\{D^tg\}_{t \in [0,\infty)}$ forms a continuous frame and compute its frame bounds. 

\begin{thm}
    Let $\{z_j\}_{j \in \N_0}$ be an increasing positive real Carleson sequence, and let $D$ and $g$ be as before. Then, the family $\{D^tg\}_{t \in [0,\infty)}$ forms a continuous frame with 
    \[
    \frac{\Delta^{-1}(z_0^2-1)}{2\log{z_0}}\|f\|^2 \leq \int_0^\infty |\ang{f,D^tg}|^2 dt \leq \Delta\|f\|^2.
    \]
\end{thm}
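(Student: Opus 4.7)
The plan is to decompose the integral over $[0,\infty)$ into unit subintervals and reduce each piece to the discrete frame $\{D^n g\}_{n=0}^\infty$ whose bounds come from Theorem~\ref{theorem_estimates}. The key algebraic observation is that because $D$ is self-adjoint with positive diagonal entries, the fractional power $D^s$ is self-adjoint and a contraction on $\ell^2(\N_0)$ for every $s \geq 0$, so that
\[
\langle f, D^{n+s}g\rangle = \langle D^s f, D^n g\rangle, \qquad n\in\N_0,\ s\in[0,1].
\]
Writing $t = n+s$, splitting $[0,\infty) = \bigcup_{n\geq 0}[n,n+1)$, and applying Tonelli (the integrand is nonnegative, and measurability of $t\mapsto \langle f,D^tg\rangle$ is immediate since $|z_j^t|\leq 1$ and the series converges absolutely by Cauchy--Schwarz), one obtains
\[
\int_0^\infty |\langle f, D^t g\rangle|^2\,dt = \int_0^1 \sum_{n=0}^\infty |\langle D^s f, D^n g\rangle|^2\,ds.
\]

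The upper bound then follows immediately by applying the upper frame bound from Theorem~\ref{theorem_estimates} inside the integral: $\sum_n |\langle D^sf, D^ng\rangle|^2 \leq \Delta\|D^sf\|^2 \leq \Delta\|f\|^2$ since $\|D^s\|\leq 1$. Similarly, the lower frame bound gives
\[
\int_0^1\sum_{n=0}^\infty |\langle D^sf, D^ng\rangle|^2\,ds \;\geq\; \Delta^{-1}\int_0^1 \|D^sf\|^2\,ds \;=\; \Delta^{-1}\sum_{j=0}^\infty |f_j|^2 \int_0^1 z_j^{2s}\,ds \;=\; \Delta^{-1}\sum_{j=0}^\infty |f_j|^2\,\frac{z_j^2-1}{2\log z_j}.
\]

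The remaining step, and the only one that is not a direct consequence of Theorem~\ref{theorem_estimates} and Tonelli, is to show that the function $h(z) := \frac{z^2-1}{2\log z}$ is monotonically increasing on $(0,1)$. Granting this, the monotonicity of $\{z_j\}$ implies $h(z_j)\geq h(z_0)$ for every $j$, and the desired lower bound $\frac{\Delta^{-1}(z_0^2-1)}{2\log z_0}\|f\|^2$ falls out. To verify monotonicity, observe that $h'(z)$ has the same sign as $\varphi(z) := 2z^2\log z - z^2 + 1$; since $\varphi(1) = 0$ and $\varphi'(z) = 4z\log z < 0$ on $(0,1)$, we have $\varphi>0$ on $(0,1)$, hence $h'>0$ there. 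I expect this calculus lemma to be the main (but very minor) technical point; all of the frame-theoretic work is absorbed into the single contraction-plus-Tonelli identity above.
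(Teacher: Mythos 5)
Your proof is correct, and its core idea---moving the fractional power onto $f$ via $\langle f, D^{n+s}g\rangle = \langle D^s f, D^n g\rangle$ and invoking the discrete frame bounds of Theorem~\ref{theorem_estimates}---is exactly the idea in the paper's proof. The implementations differ in two minor but worthwhile ways. First, the paper realizes the integral as a limit of Riemann sums with mesh $1/N$ and then regroups the sum over $n/N$ into residue classes, whereas you split $[0,\infty)$ into unit intervals and apply Tonelli to get the exact identity $\int_0^\infty |\langle f, D^t g\rangle|^2\,dt = \int_0^1 \sum_{n}|\langle D^s f, D^n g\rangle|^2\,ds$; your route avoids the (unjustified in the paper) interchange of the limit $N\to\infty$ with an infinite sum, so it is the more airtight of the two. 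Second, for the lower bound the paper uses monotonicity of the spectrum \emph{before} summing, via $\|D^{k/N}f\|^2 \geq z_0^{2k/N}\|f\|^2$, and then evaluates the resulting geometric Riemann sum in the limit; you integrate $\int_0^1 z_j^{2s}\,ds = \frac{z_j^2-1}{2\log z_j}$ exactly and then need the calculus fact that $h(z) = \frac{z^2-1}{2\log z}$ is increasing on $(0,1)$, which you verify correctly ($\varphi(z) = 2z^2\log z - z^2 + 1$ satisfies $\varphi(1)=0$ and $\varphi'<0$ on $(0,1)$). Both bookkeeping choices land on the same constant $\frac{\Delta^{-1}(z_0^2-1)}{2\log z_0}$, so there is no gap; your version simply trades the limit computation $\lim_N \frac{1-z_0^2}{N(1-z_0^{2/N})}$ for the monotonicity lemma.
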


\begin{proof}
    Let $\Delta$ be as in Theorem \ref{theorem_estimates}. Using Riemann sums, we can write 
    \begin{align*}
        \int_0^\infty |\ang{f,D^tg}|^2 \;dt &= \lim_{N\to \infty } \sum_{n = 0}^\infty \frac{1}{N}|\ang{f,D^{n/N}g}|^2 \\
        &= \lim_{N\to \infty} \sum_{n=0}^\infty \sum_{k = 0}^{N-1} \frac{1}{N}|\ang{f,D^{n + k/N}g}|^2 \\
        &= \lim_{N\to \infty} \sum_{k = 0}^{N-1} \sum_{n=0}^\infty  \frac{1}{N}|\ang{D^{k/N}f,D^{n}g}|^2 .\\ 
    \end{align*}
    By Theorem \ref{theorem_estimates}, we have that 
    \[
    \sum_{n=0}^\infty  |\ang{D^{k/N}f,D^{n}g}|^2 \leq \Delta \|D^{k/N}f\|^2 \leq \Delta \|f\|^2.
    \]
    Therefore, it follows that $\Delta$ is an upper frame bound for $\{D^tg\}_{t \in [0,\infty)}$. Similarly, we can compute 
    \begin{align*}
     \sum_{n=0}^\infty  |\ang{D^{k/N}f,D^{n}g}|^2 &\geq \sum_{n=0}^\infty \Delta^{-1} \|D^{k/N}f\|^2,  
    \end{align*}
    so that
    \begin{align*}
        \lim_{N\to \infty} \sum_{k = 0}^{N-1} \sum_{n=0}^\infty  \frac{1}{N}|\ang{D^{k/N}f,D^{n}g}|^2 &\geq \lim_{N\to \infty}\sum_{k=0}^{N-1}\frac{1}{N} \Delta^{-1} \|D^{k/N}f\|^2 \\
        &\geq \lim_{N\to \infty}\Delta^{-1}\|f\|^2\sum_{k=0}^{N-1}\frac{1}{N}  z_0^{2k/N} \\
        &= \Delta^{-1}\|f\|^2 \lim_{N\to \infty} \frac{1-z_0^2}{N(1-z_0^{2/N})} \\
        &= \frac{\Delta^{-1}(z_0^2-1)}{2\log{z_0}}\|f\|^2.
    \end{align*}
    This completes the proof.
\end{proof}

\medskip
\noindent {\bf{Acknowledgments}.} Both authors of the paper were supported in part by the NSF grant DMS-2208031. We thank A.~Aldroubi, C.~Heil,  A.~Fletcher, and J.~Mashreghi for helpful discussions. Special thanks are reserved for O.~Christensen and M.~Hasannasab for introducing their conjecture and sharing their ideas on related problems. We are also grateful to the anonymous referee for helping us improve the manuscript. 

\bibliographystyle{abbrv}
\bibliography{refs1}

\end{document}